\newtheorem{theorem}{Theorem}
\newtheorem{corollary}[theorem]{Corollary}
\newtheorem{proposition}[theorem]{Proposition}
\theoremstyle{remark}
\newtheorem{remark}[theorem]{Remark}
\theoremstyle{definition}
\newtheorem{definition}{Definition}
\def\R{\mathbb{R}}
\def\B{\mathcal{B}}
\def\G{\mathcal{G}}
\def\L{\mathcal{L}}
\def\T{\mathcal{T}}
\def\K{\mathcal{K}}
\newcommand{\ONW}{\text{\it {O:NW}}}
\newcommand{\ONE}{\text{\it {O:NE}}}
\newcommand{\OSW}{\text{\it {O:SW}}}
\newcommand{\OSE}{\text{\it {O:SE}}}
\newcommand{\XNW}{\text{\it {X:NW}}}
\newcommand{\XNE}{\text{\it {X:NE}}}
\newcommand{\XSW}{\text{\it {X:SW}}}
\newcommand{\XSE}{\text{\it {X:SE}}}
\begin{document}

\title{Grid Diagrams, Braids, and Contact Geometry}

\author[Ng, Thurston]{Lenhard Ng and Dylan Thurston}


\address{Mathematics Department, Duke
University, Durham, NC 27708}
\email{ng@math.duke.edu}

\address {Mathematics Department, Barnard College, Columbia
  University, New York, NY 10027}
\email{dpt@math.columbia.edu}

\begin{abstract}
We use grid diagrams to present a unified picture of braids,
Legendrian knots, and transverse knots.
\end{abstract}

\maketitle


\section{Introduction}

Grid diagrams, also known in the literature as arc presentations, are
a convenient combinatorial tool for studying knots and links in
$\mathbb{R}^3$. Although grid diagrams (or equivalent structures) have been
studied for over a century (\cite{bib:Bru,bib:Cro,bib:Dyn}), they have
recently regained prominence due
to their role in the combinatorial formulation of knot Floer homology
(\cite{bib:MOS,bib:MOST}).

It has been known for some time that grid
diagrams are closely related to contact geometry as well as to braid
theory. Our purpose here is to indicate the extent to which the
relationships are similar. Indeed, braids, like the Legendrian and
transverse knots in contact geometry, can be viewed as certain
equivalence classes of grid diagrams, and we will see that the various
equivalences fit into one single description.  Furthermore, this
description is compatible with the various maps between these objects,
like the transverse knot constructed from a braid.
Much of the picture we
will present has previously appeared, but we believe that the full
picture (especially the part concerning braids) is new.

\begin{definition}
A \textit{grid diagram} with \textit{grid number $n$} is an $n\times
n$ square grid with $n$ $X$'s and $n$ $O$'s placed in distinct
squares, such that each row and each column contains exactly one $X$
and one $O$.
\label{def:grid}
\end{definition}

We will employ the word ``knot'' throughout as shorthand for
``oriented knot or oriented link''.
Then any grid diagram yields a diagram of a knot in a standard way:
connect $O$ to $X$ in each row, connect $X$ to $O$ in each column, and
have the vertical line segments pass over the horizontal ones
(Figure~\ref{fig:52grid}). In addition, one can associate to any grid
diagram not only a topological knot but
also a braid, a Legendrian knot, and a transverse knot.
We will use the following notation:
\begin{align*}
\G &= \{\text{grid diagrams}\} \\
\K &= \{\text{isotopy classes of topological knots}\} \\
\B &= \{\text{isotopy classes of braids modulo conjugation and exchange}\} \\
\L &= \{\text{Legendrian isotopy classes of Legendrian
  knots}\} \\
\T &= \{\text{transverse isotopy classes of transverse
  knots}\}.
\end{align*}
(For definitions, see Section~\ref{sec:defs}.)

In Section~\ref{sec:defs}, we will review maps between these various
sets that fit together into the following commutative diagram:
\begin{equation}
\vcenter{\xymatrix{
\G \ar[r] \ar[d] \ar[dr] \ar@/^4pc/[ddrr]
& \L \ar[d] \ar[ddr] & \\
\B \ar[r] \ar[drr] & \T \ar[dr] & \\
& & \K.
}}
\label{eq:diagram}
\end{equation}
Here the map from $\G$ to $\K$ is as described above. For the other maps,  see also \cite{bib:Ben,bib:Cro,bib:Dyn,bib:KN,bib:MM,bib:OST}.

\begin{figure}
\centerline{
\resizebox{5.25in}{!}{\input{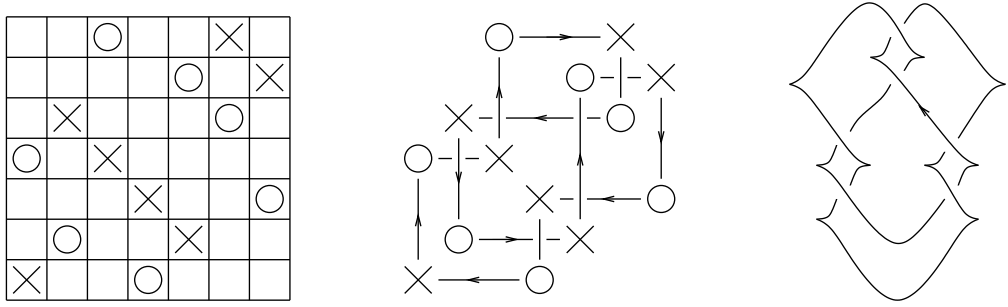}}
}
\caption{A grid diagram and corresponding knot diagram and Legendrian front.}
\label{fig:52grid}
\end{figure}

In \cite{bib:Cro} (see also \cite{bib:Dyn}), Cromwell provides a list
of alterations of grid
diagrams that do not change topological knot type, the grid-diagram
equivalent of Reidemeister moves for knot diagrams. These are
collectively known as \textit{Cromwell moves} and consist of
translations, commutations, and stabilizations/destabilizations. The
last we distinguish into four types, \XNW, \XNE, \XSW, and \XSE,
following \cite{bib:OST}.

\begin{proposition}[Cromwell \cite{bib:Cro}]
The map $\G\rightarrow\K$ sending grid diagrams to topological knots
induces a bijection
\[
\K \longleftrightarrow \G/(\text{translation, commutation, (de)stabilization}).
\]
\end{proposition}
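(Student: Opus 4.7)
The plan is to establish both that each Cromwell move preserves the topological knot type and, conversely, that any two grid diagrams yielding isotopic knots are related by a finite sequence of Cromwell moves.

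The first direction is a routine case check. Translation (cyclically permuting rows or columns) is manifestly an ambient isotopy once one views the diagram as living on a torus; commutations correspond to isotoping a pair of parallel horizontal or vertical strands past each other in a region free of obstructions; and each of the four (de)stabilizations is a local Reidemeister I type move that inserts or removes a small kink without changing knot type. Surjectivity of $\G \to \K$ follows by isotoping any knot to a rectilinear embedding whose vertical and horizontal edges have distinct $x$- and $y$-coordinates respectively, with the convention that verticals overcross horizontals; such an embedding is literally a grid diagram.

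The substantive content is injectivity. The strategy is to invoke Reidemeister's theorem and reduce to showing two facts: (a) any two grid diagrams producing the same planar knot diagram are Cromwell-equivalent, and (b) each of the Reidemeister moves R1, R2, R3 on an underlying planar diagram can be realized by a sequence of Cromwell moves on the associated grid diagrams. For (a) one compares the rectilinearizations arising from different griddings of the same diagram and interpolates between them by commutations together with paired stabilizations and destabilizations (the latter needed to introduce or remove extra corners that align the diagram with the grid). For (b), R1 corresponds essentially directly to a stabilization, R2 to two stabilizations followed by a commutation, while R3 requires the most care: the triangle move must be subdivided into a sequence of local modifications, each realized by a commutation or a (de)stabilization.

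The principal obstacle is R3, where one must track carefully how the three crossings migrate across the triangle and how the grid alignment shifts throughout. In practice I would follow Cromwell's original treatment \cite{bib:Cro} (or the streamlined version of Dynnikov \cite{bib:Dyn}), which furnishes explicit move sequences implementing each Reidemeister move on a rectilinear diagram, rather than reconstructing these from scratch.
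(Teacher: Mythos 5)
This proposition is stated in the paper with an attribution to Cromwell \cite{bib:Cro} and is not proved there; the only in-text remark is that each Cromwell move visibly preserves topological knot type. So there is no argument in the paper to compare yours against, and your proposal has to stand on its own. Its skeleton is the standard one (moves preserve knot type; surjectivity by rectilinearizing a knot; injectivity by reducing to combinatorial moves on diagrams), and nothing in it is wrongly oriented. But as a proof it has a genuine gap: your steps (a) and (b) carry essentially all of the content of the theorem, and you assert them and then explicitly defer to \cite{bib:Cro} and \cite{bib:Dyn}. ``Subdivide the R3 triangle move into local modifications, each a commutation or (de)stabilization'' is a restatement of what must be proved, not an argument. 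There is also a structural point you pass over: applying a Reidemeister move to the diagram of a grid produces a planar diagram that is no longer rectilinear, so the reduction has to be phrased as ``if the diagrams of two grids differ by a planar isotopy and one R-move, the grids are Cromwell-equivalent,'' and the re-gridding after each move is exactly where (a) and (b) interact and where the real bookkeeping lives.

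Two smaller corrections. A grid (de)stabilization is in general a planar isotopy that inserts or deletes a small jog in a strand --- no crossing is created --- so it is not a ``Reidemeister I type move,'' although your conclusion that it preserves knot type is of course correct. And if you do fall back on Cromwell's original treatment, be aware that his proof does not proceed via Reidemeister moves on planar diagrams at all: it analyzes generic isotopies of arc presentations relative to the open book. So invoking \cite{bib:Cro} does not complete the Reidemeister-based argument you sketched; it replaces it. Either commit to the Reidemeister route and actually produce the move sequences for planar isotopy, R1, R2, R3 (this can be done, but it is the bulk of the work), or present the open-book argument --- but do not present the outline of one and cite the other as its missing core.
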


We will see that the maps from $\G$ to $\B$, $\L$, and $\T$ can be
similarly understood. More precisely, we have the following result.

\begin{proposition}
\label{prop:main}
Let $\tilde{\G}$ denote the quotient set $\G/(\text{translation,
  commutation})$.
The maps $\G\rightarrow\B$, $\G\rightarrow\L$, and $\G\rightarrow\T$
induce bijections
\begin{align*}
\B
&\longleftrightarrow \tilde{\G}/(\text{\XNE,\XSE{}
(de)stabilization}) \\
\L &\longleftrightarrow \tilde{\G}/(\text{\XNE,\XSW{} (de)stabilization}) \\
\T &\longleftrightarrow \tilde{\G}/(\text{\XNE,\XSW,\XSE{} (de)stabilization}).
\end{align*}
\end{proposition}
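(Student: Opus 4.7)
The plan is to establish each bijection by verifying three things: that the listed grid moves preserve the target equivalence (well-definedness); that every braid, Legendrian knot, or transverse knot is realized by some grid diagram (surjectivity); and that if two grid diagrams have the same image in the target, they are related by the listed moves (injectivity). The first two steps will be essentially routine once the maps $\G \to \B$, $\G \to \L$, and $\G \to \T$ are spelled out in Section~\ref{sec:defs}; the real work lies in injectivity.

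For the Legendrian case, the result is in substance due to Ozsv\'ath-Szab\'o-Thurston \cite{bib:OST}: after rotating the grid $45^\circ$ and smoothing corners one obtains the front projection of a Legendrian knot; translation and commutation are manifestly Legendrian isotopies; \XNE{} and \XSW{} (de)stabilizations effect ``cusp slides'' that preserve the Legendrian class, whereas \XNW{} and \XSE{} are the two Legendrian stabilizations (of opposite signs). The crucial injectivity input from \cite{bib:OST} is that any Legendrian isotopy can be approximated by a piecewise-linear path on a finer grid and then decomposed into these elementary moves. The transverse case will then follow from the Legendrian case via the classical theorem that transverse equivalence of the positive pushoff coincides with Legendrian equivalence modulo negative stabilization; since \XSE{} is precisely negative Legendrian stabilization, adjoining it to the Legendrian relations yields the transverse relations.

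The braid case is the novel content. For well-definedness of $\G \to \B$, one tilts the grid $45^\circ$ and closes it around a braid axis along the diagonal to produce a closed braid whose index is the grid number $n$. Translation and commutation are then braid isotopies, while \XNE{} and \XSE{} (de)stabilizations reduce, after a local computation near the affected $X$, to braid conjugation by an Artin generator and to a Birman--Menasco exchange move respectively. The omitted stabilizations \XNW{} and \XSW{} would drag a strand across the braid axis and so genuinely change the braid. Surjectivity is straightforward: any braid word in $\sigma_1,\dots,\sigma_{n-1}$ admits an obvious one-row-per-letter grid realization. The main obstacle is injectivity. The plan is to realize each generator of the conjugation-plus-exchange equivalence on $\B$---conjugation by a single Artin generator and one exchange move---as an explicit finite sequence of translations, commutations, and \XNE{}, \XSE{} (de)stabilizations, in a pattern parallel to the Legendrian argument of \cite{bib:OST} but with \XSE{} playing the role of \XSW. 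The delicate step is to carry out this decomposition without ever invoking an \XNW{} or \XSW{} move; we expect this can be arranged because, in the tilted picture, the forbidden moves are precisely those that cross the braid axis, whereas conjugation and exchange do not.
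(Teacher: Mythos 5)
Your overall architecture matches the paper's: the Legendrian bijection is quoted from \cite{bib:OST}, the transverse one follows by adjoining \XSE{} ($=$ negative Legendrian stabilization) to the Legendrian relations via the pushoff theorem, and the braid case is the new content, handled by realizing conjugation and exchange as grid moves and building an inverse from braid words to grid diagrams. However, the braid part of your proposal rests on local computations that are wrong for the map actually being used. The map $\G\to\B$ is not ``tilt $45^\circ$ and close around a diagonal braid axis with braid index equal to the grid number'': it is the left-to-right flip, in which each leftward horizontal segment is replaced by two rightward ones passing behind the vertical segments, and the braid index is in general smaller than the grid number (the diagram of Figure~\ref{fig:52grid} maps to $\sigma_2^{-1}\sigma_1\sigma_2^2\sigma_1^2\in\B_3$; see Figure~\ref{fig:52braid}). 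With this map, \XNE{} and \XSE{} (de)stabilizations leave the braid word literally unchanged---they merely add or delete a corner of a rectilinear segment---so they do not produce a conjugation by an Artin generator or a Birman--Menasco exchange move as you assert. Conversely, your claim that ``translation and commutation are braid isotopies'' is false: interchanging two adjacent rows in which $X$ lies to the left of $O$ in both rows changes $B(G)$ from $B_1\sigma_{n-1}^{-1}B_2\sigma_{n-1}$ to $B_1\sigma_{n-1}B_2\sigma_{n-1}^{-1}$, i.e.\ by an exchange move; this is where the exchange relation actually enters. These misassignments do not spoil well-definedness, since the target is braids modulo conjugation \emph{and} exchange either way, but they show the local analysis has not been carried out.

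More importantly, the step you explicitly defer (``we expect this can be arranged'') is exactly where the new work of the paper lies: one must exhibit an explicit finite sequence of commutations and \XNE,\XSE{} (de)stabilizations realizing a braid exchange move on grid diagrams, and one must check that the inverse construction $B\mapsto G(B)$ is well defined up to the allowed grid moves when the braid word is altered by the relations $\sigma_i\sigma_i^{-1}=\sigma_i^{-1}\sigma_i=1$, $\sigma_i\sigma_j=\sigma_j\sigma_i$ for $|i-j|\ge 2$, and $\sigma_i\sigma_{i+1}\sigma_i=\sigma_{i+1}\sigma_i\sigma_{i+1}$ (the first of these is the only place negative Legendrian stabilization-type subtleties arise on the braid side). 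The paper supplies these decompositions concretely (Figures~\ref{fig:braidexch} and~\ref{fig:braidexch-detail}); without them, your injectivity argument is a plan rather than a proof, and the heuristic ``the forbidden moves are those that cross the braid axis'' does not substitute for the computation.
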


It follows from this result that the maps between $\B,\L,\T,\K$ can
also be interpreted in terms of grid diagrams. For instance, the map
$\B\to\T$ is the quotient
\[
\tilde{\G}/(\text{\XNE,\XSE{} (de)stabilization}) \longrightarrow
\tilde{\G}/(\text{\XNE,\XSW,\XSE{} (de)stabilization}).
\]
Similarly, the maps $\B\to\K$, $\L\to\T$, $\L\to\K$, $\T\to\K$, in
terms of grid diagrams, are quotients by various (de)stabilizations.

\begin{figure}
\centerline{
\resizebox*{!}{1.75in}{\input{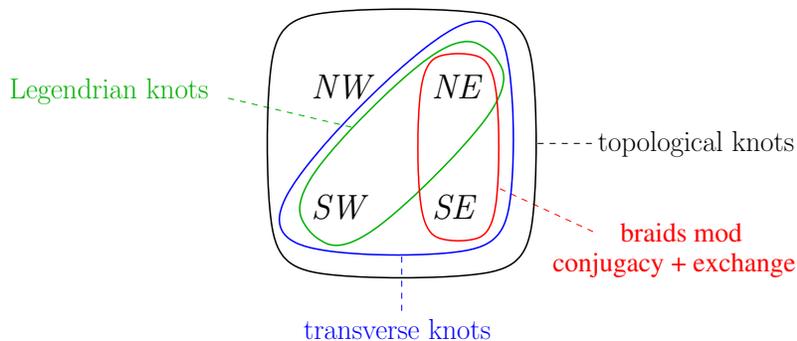}}
}
\caption{
Quotienting $\tilde{\G}$, the set of grid-diagram orbits under
translation and commutation, by various combinations of $X$
(de)stabilizations yields equivalence classes of
braids and various types of knots.
}
\label{fig:stab-diagram}
\end{figure}

Proposition~\ref{prop:main} is summarized diagrammatically in
Figure~\ref{fig:stab-diagram}. The bijections in
Proposition~\ref{prop:main} involving $\L$ and $\T$ have already been
established in \cite{bib:OST}; the new content in this note is the
bijection involving $\B$.

We note that stabilization operations on braids and Legendrian and
transverse knots can be expressed in terms of Cromwell moves. More
precisely, we have the following.

\begin{proposition}
\label{prop:stab}
Under the identifications of Proposition~\ref{prop:main}, we have
\begin{align*}
\text{positive braid stabilization} & \longleftrightarrow
\text{\XSW{} stabilization} \\
\text{negative braid stabilization} & \longleftrightarrow
\text{\XNW{} stabilization} \\
\text{positive Legendrian stabilization} & \longleftrightarrow
\text{\XNW{} stabilization} \\
\text{negative Legendrian stabilization} & \longleftrightarrow
\text{\XSE{} stabilization} \\
\text{transverse stabilization} & \longleftrightarrow
\text{\XNW{} stabilization}.
\end{align*}
\end{proposition}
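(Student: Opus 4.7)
The plan is to verify each of the five correspondences by checking locally what a single X-stabilization does to the associated braid word and Legendrian front, and then comparing with the standard local model for each named stabilization. Since Proposition~\ref{prop:main} has already identified which X-stabilizations act trivially on each of $\B$, $\L$, $\T$, the surviving X-stabilizations are forced to act non-trivially; the only real task is to match types and signs.

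I would first fix the explicit descriptions, from Section~\ref{sec:defs}, of the maps $\G\to\B$ and $\G\to\L$: $\G\to\B$ orients each column of the grid from its $O$ to its $X$ (or conversely) and closes the resulting strands into a braid, while $\G\to\L$ rotates the grid $45^\circ$, smooths NE and SW corners, and promotes NW and SE corners to cusps. In each case, a type-$r$ X-stabilization has a fully local description near the chosen $X$: one splits the $X$'s row and column and inserts a new $X$ in the $r$-corner of the resulting $2\times 2$ block, with the new $O$'s determined by the parity of the rows and columns.

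For the braid statements I would draw the local braid picture before and after. An \XSW{} stabilization introduces one new strand that crosses the old one exactly once, with the sign (determined by the orientation convention from Section~\ref{sec:defs}) a positive crossing; this is exactly a positive Markov stabilization. Symmetrically, \XNW{} yields a negative Markov stabilization. (\XNE{} and \XSE{} produce only a removable kink, reconfirming from the braid side that they are trivial in $\B$.) For the Legendrian statements, I would draw the local front picture: an \XNW{} stabilization turns the old NW corner of the rotated diagram into a pair of cusps forming the standard positive zig-zag of \cite{bib:OST}, and \XSE{} produces the opposite, negative zig-zag. The transverse statement then follows either by repeating the local calculation in the transverse picture, or more cleanly by observing that the Legendrian-to-transverse map sends positive Legendrian stabilization to transverse stabilization, so the identification \XNW{} $\leftrightarrow$ transverse stabilization descends from the Legendrian identification (and is consistent with its description as negative Markov stabilization, under which $\mathit{sl}$ decreases by two).

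The main obstacle is bookkeeping of conventions: the sign of the introduced braid generator depends on how strands are oriented and on the side where the new strand appears, and which front zig-zag is labeled ``positive'' varies across the literature. Once these conventions are fixed to match those of Section~\ref{sec:defs}, each of the five verifications reduces to a single local picture.
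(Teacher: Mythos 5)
Your proposal is correct and matches the paper's own treatment: the paper proves Proposition~\ref{prop:stab} by exactly this kind of local inspection of the effect of each $X$ stabilization on the associated braid word and Legendrian front (recorded in Table~\ref{tab:moves}), with the transverse case likewise deduced from the Legendrian one via the positive pushoff. The only caveat is the convention bookkeeping you already flag, which is indeed where all the content lies.
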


\noindent
Proposition~\ref{prop:stab} follows from an inspection of the effect
of the various $X$ stabilizations on the corresponding braid or
Legendrian or transverse knot.
See also the table at the end of Section~\ref{ssec:maps}.

Propositions~\ref{prop:main} and~\ref{prop:stab} give
an alternate proof via grid diagrams of the following result.

\begin{proposition}[Transverse Markov Theorem \cite{bib:OSh,bib:Wr}]
\label{prop:TMT}
Two braids represent isotopic transverse knots if and only if they are
related by a sequence of conjugations and positive braid
stabilizations and destabilizations.
\end{proposition}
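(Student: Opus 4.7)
The approach is to derive Proposition~\ref{prop:TMT} directly from Propositions~\ref{prop:main} and~\ref{prop:stab} via the grid-diagram correspondence. By Proposition~\ref{prop:main},
\[
\B \;\longleftrightarrow\; \tilde{\G}/(\XNE,\XSE\text{ (de)stab})
\quad\text{and}\quad
\T \;\longleftrightarrow\; \tilde{\G}/(\XNE,\XSW,\XSE\text{ (de)stab}),
\]
so the forgetful map $\B\to\T$ is exactly the further quotient by \XSW\ (de)stabilization, which by Proposition~\ref{prop:stab} is positive braid (de)stabilization.

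The ``if'' direction is immediate: conjugation and positive (de)stabilization on the braid side correspond, respectively, to translations/commutations and \XSW\ (de)stabilizations on the grid side (the former built into $\tilde{\G}$, the latter being among the defining relations of $\T$), so they preserve the class in $\T$.

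For the ``only if'' direction, suppose $\beta_1$ and $\beta_2$ are braids whose transverse closures are transversely isotopic. Pick grid diagrams $g_1, g_2$ inducing them; since $[g_1]=[g_2]$ in $\tilde{\G}/(\XNE,\XSW,\XSE\text{ (de)stab})$, the two are joined by a finite sequence of translations, commutations, and $\{\XNE,\XSE,\XSW\}$ (de)stabilizations. Replaying this sequence on the braid side, each translation or commutation becomes a conjugation, each \XSW\ (de)stabilization becomes a positive braid (de)stabilization by Proposition~\ref{prop:stab}, and each \XNE\ or \XSE\ (de)stabilization produces one of the equivalences (conjugation or exchange) built into the definition of $\B$.

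The main obstacle, then, is to eliminate the exchange moves that may appear in this sequence, since they are not allowed in Proposition~\ref{prop:TMT}. My plan is to establish a local grid-diagram identity rewriting each \XNE\ or \XSE\ (de)stabilization as an \XSW\ stabilization, followed by translations and commutations, followed by an \XSW\ destabilization. Such an identity would convert each exchange move on braids into a positive stabilization, a conjugation, and a positive destabilization, reducing the chain from $\beta_1$ to $\beta_2$ to one using only conjugations and positive (de)stabilizations. The verification would be a direct case analysis on the local configurations of $X$'s and $O$'s around the stabilization sites, comparable to the local inspections underlying Proposition~\ref{prop:stab}.
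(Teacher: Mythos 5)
Your overall strategy --- reading the map $\B\to\T$ as the further quotient of $\tilde{\G}/(\XNE,\XSE\text{ (de)stab})$ by \XSW{} (de)stabilization, identifying the latter with positive braid (de)stabilization via Proposition~\ref{prop:stab}, and then translating a chain of grid moves back into braid moves --- is exactly the paper's intended derivation, and your ``if'' direction and your reduction of the ``only if'' direction to ``eliminate the exchange moves'' are both correct. But the step you leave as a plan is the crux of the whole argument, and the mechanism you propose for it does not work. First, the exchange moves in the braid-side chain do not come from \XNE{} or \XSE{} (de)stabilizations: by Table~\ref{tab:moves} and Proposition~\ref{prop:gridbraid}, those stabilizations leave the braid $B(G)$ unchanged. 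The exchanges arise from \emph{commutations}, which are already among the relations defining $\tilde{\G}$; rewriting the \XNE{} and \XSE{} moves therefore cannot remove them, and your proposed rewriting even reintroduces translations and commutations in its middle step. Second, the claimed local identity is false for \XSE: an \XSE{} stabilization changes the Legendrian type of $L(G)$ by a negative Legendrian stabilization, whereas the sequence ``\XSW{} stabilization, then translations and commutations, then \XSW{} destabilization'' preserves the Legendrian isotopy class (all four kinds of moves in it do), so no such sequence of grid moves can connect the two diagrams.

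What is actually needed --- and what the paper supplies --- is the observation of Birman and Wrinkle, reproduced with an explicit braid-word computation in Section~\ref{sec:defs}: an exchange move $B_1\sigma_{n-1}B_2\sigma_{n-1}^{-1}\mapsto B_1\sigma_{n-1}^{-1}B_2\sigma_{n-1}$ is a composition of conjugations, one positive braid stabilization, and one positive braid destabilization. This is a purely algebraic identity in the braid group, not a grid-diagram identity. Substituting it for each exchange move occurring in your chain completes the ``only if'' direction; with that replacement your argument coincides with the paper's.
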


\noindent
In the usual formulation of Proposition~\ref{prop:TMT}, the map from
braids to transverse knots uses a contact-geometric construction of
Bennequin
\cite{bib:Ben} (cf.\ Section~\ref{ssec:maps}), rather than the map we
use here; see \cite{bib:KN} for a proof that the two maps coincide.

In Section~\ref{sec:defs}, we recall the various relevant constructions
and discuss the effects of grid-diagram symmetries on the maps in
Formula~(\ref{eq:diagram}). We prove our main result,
Proposition~\ref{prop:main}, in Section~\ref{sec:proof}.

\section{Definitions and Maps}
\label{sec:defs}


\subsection{Grid diagrams}

The Cromwell moves on grid diagrams, translation, commutation, and
stabilization/\allowbreak destabilization, are illustrated in
Figure~\ref{fig:52gridmoves} and defined below.  From that
figure it is clear that
each Cromwell move preserves the topological type of the corresponding
knot.

Translation
views a grid diagram as lying on a torus by identifying opposite ends
of the grid, and changes the diagram by translation in the torus. Any
translation is a composition of some number of \textit{vertical
  translations}, which move the top row of the diagram to the bottom
or vice versa, and \textit{horizontal translations}, which move the
leftmost column of the diagram to the rightmost or vice versa.

\begin{figure}
\centerline{
\resizebox{5.25in}{!}{\input{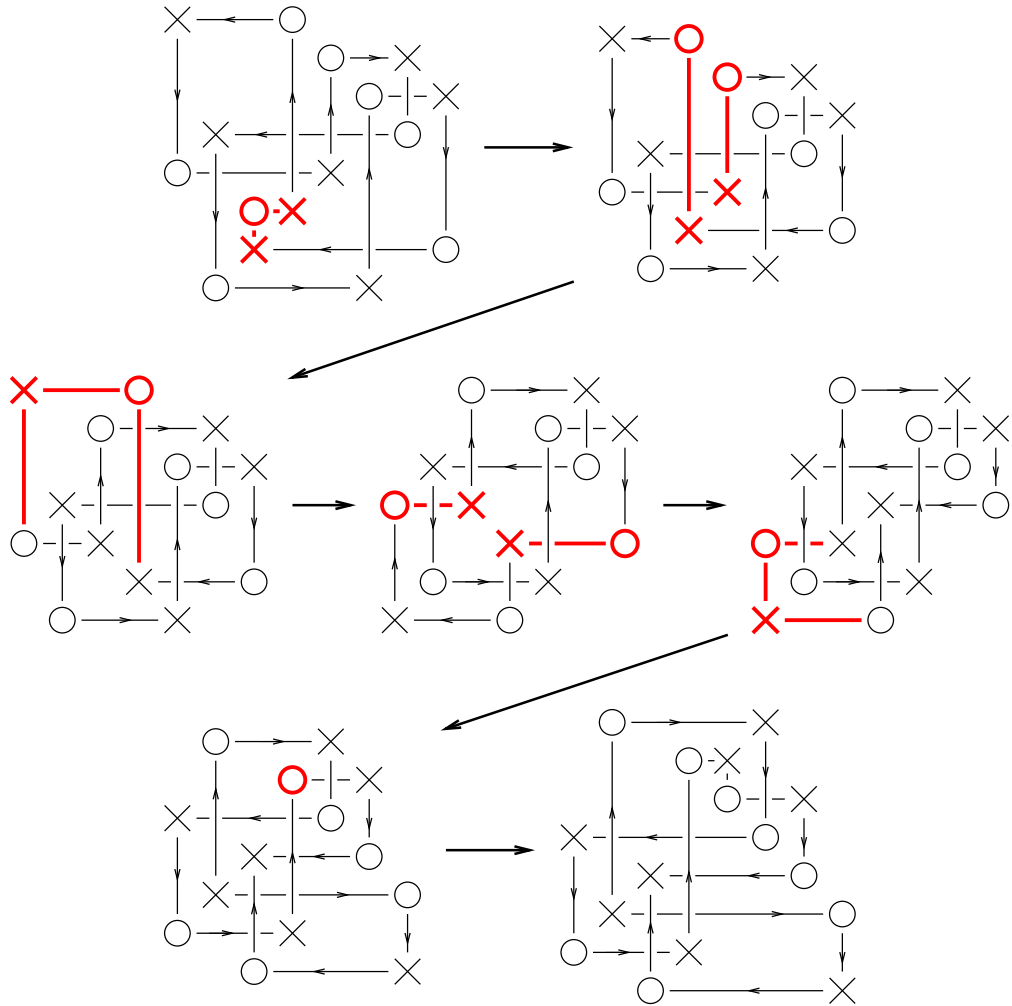}}
}
\caption{Illustration of a sequence of Cromwell moves. In succession:
  \XSE{} destabilization; horizontal commutation; vertical torus
  translation; vertical commutation; horizontal torus translation;
  \OSW{} stabilization. The highlighted sections of each diagram
  indicate the portion that changes under the following move.}
\label{fig:52gridmoves}
\end{figure}

Commutation interchanges two adjacent rows (\textit{vertical
  commutation}) or two adjacent columns (\textit{horizontal
  commutation}). These adjacent rows or columns are required to be
disjoint or nested in the following sense. For rows, the four $X$'s
and $O$'s in the adjacent rows must lie in distinct columns, and the
horizontal line segments connecting $O$ and $X$ in each row must be
either disjoint or nested (one contained in the other) when projected
to a single horizontal line; there is an obvious analogous condition
for columns.

An $X$ (resp.\ $O$) \emph{destabilization}
replaces a $2\times 2$ subgrid containing two $X$'s and one $O$
(resp.\ two $O$'s and one $X$) with a
single square containing an $X$ (resp.\ $O$), eliminating one row and
one column in the process. \emph{Stabilization} is the inverse of
destabilization. Each
(de)stabilization is identified by its type, $X$ or $O$, along with the
corner in the $2\times 2$ subgrid not occupied by a symbol. This
yields eight possibilities: \XNW, \XNE, \XSW,
\XSE, \ONW, \ONE, \OSW, \OSE. It is easy to check that any \ONW{}
(resp.\ \ONE, \OSW, \OSE) (de)stabilization can be expressed as a
composition of translations, commutations, and one \XSE{} (resp.\ XSW,
\XNE, \XNW) (de)stabilization. Thus we restrict our set of Cromwell
moves to include only $X$ (de)stabilizations.

\begin{remark}
  By the argument of \cite[Lemma 4.3]{bib:OST}, we can instead drop
  torus translations 
  and keep matching $O$ (de)stabilizations to yield alternate
  definitions for topological, Legendrian, and transverse knots in
  terms of grid diagrams. In particular, \XNE, \XSW, \OSW, and \ONE{}
  (de)stabilizations, combined with commutations, generate all torus
  translations. The same
  argument can also be adapted for braids: that is, $\B$ is also
  $\G$ modulo commutation and \XNE, \XSE, \ONW, and \OSW{}
  (de)stabilization, as follows.  Sequences of moves similar to those
  from \cite[Lemma 4.3]{bib:OST} show that any horizontal torus
  translation can be achieved by these moves, as can any vertical
  torus translation where the $O$ appears to the left of the~$X$.  But
  any vertical torus translation can be put into the correct
  position by horizontal torus translations.
\end{remark}

\subsection{Braids}

As usual, a \textit{braid} of braid index $n$ is an element of the
group $\B_n$ generated by $\sigma_1,\dots,\sigma_{n-1}$ with relations
$\sigma_i\sigma_{i+1}\sigma_i = \sigma_{i+1}\sigma_i\sigma_{i+1}$ for
$1\leq i\leq n-2$ and $\sigma_i\sigma_j = \sigma_j\sigma_i$ for
$|i-j|\geq 2$. Note the natural inclusion $\B_n \subset \B_{n+1}$
sending $\sigma_i$ to itself for $i \leq n-1$.
The relevant moves to consider on braids are:
\begin{itemize}
\item
braid conjugation: $B \mapsto B'B(B')^{-1}$ for $B,B'\in \B_n$;
\item
exchange move \cite{bib:BM}: $B_1 \sigma_{n-1} B_2 \sigma_{n-1}^{-1} \mapsto B_1
\sigma_{n-1}^{-1} B_2 \sigma_{n-1}$ on $\B_n$, where
$B_1,B_2\in\B_{n-1} \subset \B_n$;
\item
braid stabilization: either positive braid stabilization
$(B \in \B_n) \mapsto (B\sigma_n \in
\B_{n+1})$ or negative braid stabilization
$(B \in \B_n) \mapsto (B\sigma_n^{-1} \in
\B_{n+1})$; and
\item
braid destabilization: the inverse of braid stabilization.
\end{itemize}

In fact, by an observation of Birman and Wrinkle \cite{bib:BW}, an
exchange move can
be expressed as a combination of one positive stabilization, one
positive destabilization, and a number of conjugations. (Here the
positive stabilization and positive destabilization can equally well
be replaced by a negative stabilization and negative destabilization.)
For reference, we include the calculation here.
\begin{align*}
B_1\sigma_{n-1} B_2\sigma_{n-1}^{-1} & \stackrel{\text{conj}}{\longmapsto}
\sigma_{n-1} B_1\sigma_{n-1} B_2 \sigma_{n-1}^{-2}
\stackrel{+ \text{ stab}}{\longmapsto}
\sigma_{n-1} B_1\sigma_{n-1} B_2 \sigma_{n-1}^{-2} \sigma_n \\
& \stackrel{\text{conj}}{\longmapsto}
B_1\sigma_{n-1} B_2 \sigma_{n-1}^{-2} \sigma_n \sigma_{n-1}
= B_1\sigma_{n-1} B_2 \sigma_n \sigma_{n-1} \sigma_n^{-2} \\
&\stackrel{\text{conj}}{\longmapsto}
\sigma_n^{-2} B_1 \sigma_{n-1} \sigma_n B_2 \sigma_{n-1}
= B_1 \sigma_{n-1} \sigma_n \sigma_{n-1}^{-2} B_2 \sigma_{n-1} \\
& \stackrel{\text{conj}}{\longmapsto}
\sigma_{n-1}^{-2} B_2 \sigma_{n-1} B_1 \sigma_{n-1} \sigma_n
\stackrel{+ \text{ destab}}{\longmapsto}
\sigma_{n-1}^{-2} B_2 \sigma_{n-1} B_1 \sigma_{n-1} \\
& \stackrel{\text{conj}}{\longmapsto}
B_1 \sigma_{n-1}^{-1} B_2 \sigma_{n-1}.
\end{align*}

We will depict braids horizontally from left to right, with
strands numbered from top to bottom; for instance, $\sigma_1$
interchanges the top two strands, with the top strand passing over the
other as we move from left to right.

\subsection{Legendrian and transverse knots}

We give a quick description of Legendrian and transverse knots, which
occur naturally in contact geometry; see, e.g., \cite{bib:Et} for more details.
A \textit{Legendrian knot} is a knot in $\R^3$ along which the
standard contact form $dz-y\,dx$ vanishes everywhere; a
\textit{transverse knot} is a knot in $\R^3$ along which $dz-y\,dx>0$
everywhere. (Note for the condition $dz-y\,dx>0$ that the knot is
oriented.) We consider Legendrian (resp.\ transverse) knots up to
\textit{Legendrian isotopy} (resp.\ \textit{transverse isotopy}),
which is simply isotopy through Legendrian (resp.\ transverse) knots.

One convenient way to depict a Legendrian knot is through its
\textit{front
projection}, or projection in the $xz$ plane. A generic front
projection has three features: it has no vertical tangencies;
it is immersed except at cusp singularities; and at all crossings, the
strand of larger slope
passes underneath the strand of smaller slope. Any front with these
features corresponds to a Legendrian knot, with the $y$ coordinate
given by $y=dz/dx$.

The knot diagram corresponding to any grid diagram
can be viewed as the front projection of a Legendrian knot by rotating
it $45^{\circ}$ counterclockwise and smoothing out the corners, creating
cusps where necessary; see Figure~\ref{fig:52grid} for an
example. This yields a map $\G\to\L$ from grid diagrams to isotopy
classes of Legendrian knots. Note that our convention differs from the
convention of \cite{bib:OST}: the convention there is to reverse all
crossings in the grid diagram and then rotate $45^{\circ}$
clockwise. See also Section~\ref{ssec:conventions}.

In \cite{bib:OST}, it is verified that
changing a grid diagram by translation, commutation, or (in our
convention) \XSW, \XNE{}
(de)stabilization does not change the isotopy class of the
corresponding Legendrian knot. Changing by \XNW{} (resp.\ \XSE)
stabilization does change the Legendrian knot type, by
\textit{positive Legendrian stabilization} (resp.\ \textit{negative
  Legendrian stabilization}). Legendrian stabilizations can be
described in the front projection as adding a zigzag, as shown in
Figure~\ref{fig:Legstab}.

\begin{figure}
\centerline{
\resizebox{5in}{!}{\input{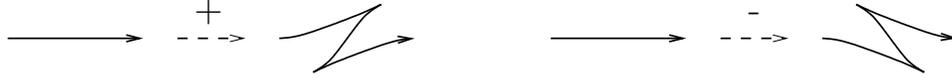}}
}
\caption{
Positive and negative Legendrian stabilizations of the front
projection of a Legendrian knot.
}
\label{fig:Legstab}
\end{figure}

Any Legendrian knot is isotopic to one obtained from some grid
diagram. It is shown in \cite{bib:OST} that the set of equivalence
classes of Legendrian knots under Legendrian isotopy corresponds
precisely to grid diagrams modulo translation, commutation, and \XNE,
\XSW{} (de)stabilization, as presented in Proposition~\ref{prop:main}.

A Legendrian knot can be $C^0$ perturbed to a transverse knot, its
positive transverse pushoff. The resulting map $\L\to\T$ is
not injective; negative Legendrian stabilization does not change the
transverse isotopy type of the positive transverse pushoff. It is a
standard fact in contact geometry \cite{bib:EFM} that this gives a bijection
\[
\T \longleftrightarrow \L/(\text{negative Legendrian stabilization}).
\]
Since negative Legendrian stabilization corresponds to an \XSE{}
Cromwell move, the characterization in Proposition~\ref{prop:main} of
$\T$ as a quotient of $\G$ holds. Note that positive Legendrian
stabilization becomes the ``transverse stabilization'' operation on
transverse knots.

\subsection{Maps between $\G,\B,\L,\T,\K$}
\label{ssec:maps}

Here we collect the constructions of the maps in
Formula~\eqref{eq:diagram}. It suffices to define $\G\to\L$, $\G\to\B$,
$\L\to\T$, $\B\to\T$, and $\T\to\K$, since the other maps follow by
composition. We note that the commutativity of the square
\[
\xymatrix{
\G \ar[r] \ar[d]& \L \ar[d]\\
\B \ar[r] & \T
}
\]
was established in \cite{bib:KN}, and in fact our description of the
maps is essentially identical to the one given there.
The maps $\G\to\L$ and $\L\to\T$ have already been discussed; since the map
$\T\to\K$ is obvious, we are left with $\G\to\B$ and $\B\to\T$.

We begin with the map $\G\to\B$, as described in
\cite{bib:Cro,bib:Dyn}; this is also called a ``flip'' in \cite{bib:MM}.
Any braid in $B_n$ can be viewed as a braid diagram:
a tangle diagram of $n$ strands in the strip $[0,1] \times\R$,
oriented so that the orientation points rightward at all points,
with some collection of $n$ distinct
points $x_1,\dots,x_n\in\R$ for which the braid intersects
$\{0\}\times\R$ and $\{1\}\times\R$ in $\{(0,x_1),\dots,(0,x_n)\}$ and
$\{(1,x_1),\dots,(1,x_n)\}$ respectively. Define a \textit{rectilinear
  braid diagram} (cf.\ ``braided rectangular diagram'' \cite{bib:MM})
to be a tangle diagram in $[0,1]\times\R$ with the
same boundary conditions as a braid diagram, but consisting
exclusively of horizontal and vertical line segments, satisfying the
following properties:
\begin{itemize}
\item
vertical segments always pass over horizontal segments;
\item
each strand can be oriented so that every horizontal segment is oriented
rightwards.
\end{itemize}
Any rectilinear braid diagram can be perturbed into a standard braid
diagram by perturbing vertical segments slightly to point rightwards,
as in Figure~\ref{fig:52braid}.

\begin{figure}
\centerline{
\resizebox{5.25in}{!}{\input{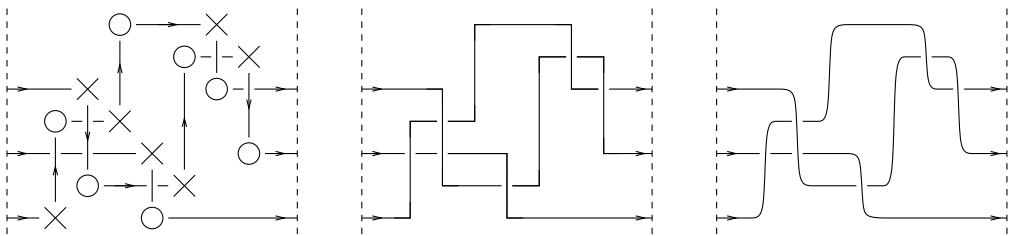}}
}
\caption{
Braid version (left) of the grid diagram in Figure~\ref{fig:52grid}.
Omitting the $X$'s and $O$'s produces a rectilinear braid diagram,
which can be perturbed to become a braid, in this case
$\sigma_2^{-1}\sigma_1\sigma_2^2\sigma_1^2\in\B_3$.}
\label{fig:52braid}
\end{figure}

Now given a grid diagram, one obtains a knot diagram as usual by
drawing horizontal and vertical lines. Turn this into a rectilinear
braid diagram by replacing any horizontal line oriented leftwards from
$O$ to $X$ by two horizontal lines, one pointing rightwards from the $O$,
one pointing rightwards to the $X$, and have these new horizontal lines
pass under all vertical line segments as usual. The rectilinear
braid diagram corresponds to a braid as described above. This produces
the desired map $\G\to\B$.

It remains to define the map $\B\to\T$.
The original contact-geometric definition from \cite{bib:Ben}
is as follows. Identify ends of $B$ to obtain a knot or link in
the solid torus $S^1\times D^2$. View the solid torus as a small
(framed) tubular neighborhood of the standard transverse unknot in $\R^3$ with
self-linking number $-1$. Then $B$ becomes a transverse knot in
a neighborhood of the transverse unknot.

There is also a combinatorial description for the map $\B\to\T$, which
we now describe.  (This description is proven to coincide with the
contact-geometric description in \cite{bib:KN}; see also
\cite{bib:MM,bib:OSh}). Create a
front by replacing each braid crossing as shown in
Figure~\ref{fig:braidfront} and joining corresponding braid
ends. (Joining ends introduces $2n$ cusps for a braid with $n$
strands; see Figure~\ref{fig:braidfront}.) This construction produces
a Legendrian knot from any braid.

\begin{figure}
\centerline{
\resizebox{5.25in}{!}{\input{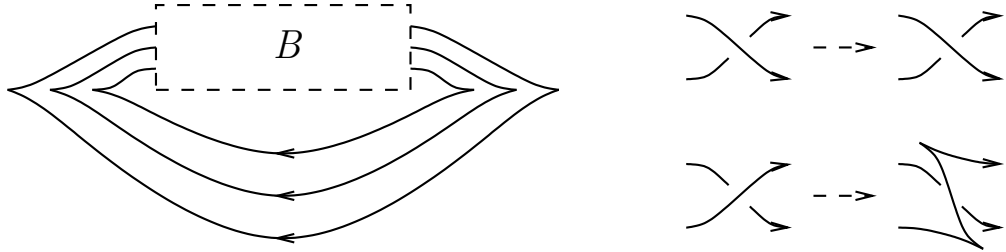}}
}
\caption{
A Legendrian front for a braid $B$.
}
\label{fig:braidfront}
\end{figure}

It is an easy exercise in Legendrian
Reidemeister moves to show that changing the braid by isotopy changes
the Legendrian knot by isotopy and negative Legendrian
(de)stab\-i\-li\-za\-tion; the stabilization is needed when one introduces
cancelling terms $\sigma_i\sigma_i^{-1}$ or $\sigma_i^{-1}\sigma_i$ in
the braid. Similarly, a conjugation or exchange move on a braid
produces a Legendrian isotopy of the Legendrian knot. See
Figure~\ref{fig:braidfrontexch} for the exchange move.

\begin{figure}
\centerline{
\resizebox{5.25in}{!}{\input{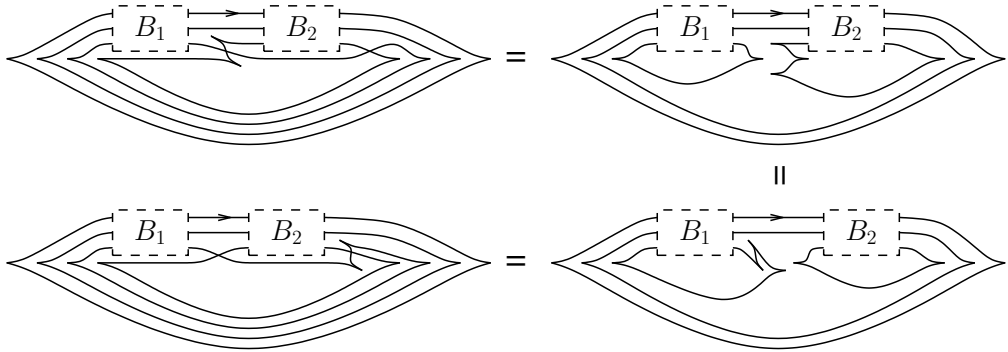}}
}
\caption{
A braid exchange move produces a Legendrian-isotopic front. Equality
denotes Legendrian isotopy.
}
\label{fig:braidfrontexch}
\end{figure}

The map $\B\to\T$ is now given as follows: given a braid, the
corresponding Legendrian front is well-defined up to isotopy and
negative Legendrian stabilization, and hence its positive transverse
pushoff is well-defined. This transverse knot (equivalently, the class
of the Legendrian knot modulo negative Legendrian
(de)stabilization) is unchanged by braid conjugation and exchange.

Table~\ref{tab:moves} has a summary of the effect of the Cromwell
moves on grid diagrams correspond to changes in the associated braid,
Legendrian knot, and transverse knot. The braid column is verified in
Section~\ref{sec:proof}, while the Legendrian and transverse columns
were established in \cite{bib:OST}. For completeness, the table
includes $O$ as well as $X$ stabilizations.

\begin{table}
\begin{center}
\begin{tabular}{@{}llll@{}}
\toprule
Grid diagram & Braid & Legendrian knot & Transverse knot \\
\midrule
torus translation & conjugation & Legendrian isotopy & transverse
isotopy \\
vertical commutation & unchanged & Legendrian isotopy & transverse
isotopy \\
horizontal commutation & conj, exchange & Legendrian isotopy &
transverse isotopy \\
\XNE, \OSW{} stab & unchanged & Legendrian isotopy &
transverse isotopy \\
\XSW, \ONE{} stab & conj, $+$ braid stab &
Legendrian isotopy & transverse isotopy \\
\XSE, \ONW{} stab & unchanged & $-$ Legendrian stab &
transverse isotopy \\
\XNW, \OSE{} stab & conj, $-$ braid stab & $+$
Legendrian stab & transverse stab \\
\bottomrule
\end{tabular}
\end{center}
\caption{The effect of Cromwell moves on associated topological structures.}
  \label{tab:moves}
\end{table}

\subsection{Symmetries and conventions}
\label{ssec:conventions}

Here we discuss various symmetries of grid diagrams and how they
relate the conventions for the maps in Formula (\ref{eq:diagram}) to
other, sometimes conflicting, conventions in the literature.
In this section, we will denote the maps $\G\to\L$,
$\G\to\T$, $\G\to\B$ described in Section~\ref{ssec:maps} by $G\mapsto
L(G)$, $G\mapsto T(G)$, $G\mapsto B_{\shortrightarrow}(G)$,
respectively.

Consider the symmetries $S_1$, $S_2$, $S_3$, and $S_4$ of grid
diagrams defined as follows:
\begin{itemize}
\item
$S_1$ rotates the grid diagram $180^\circ$;
\item
$S_2$ reflects
the diagram about the NE-SW diagonal and interchanges $X$'s and
$O$'s;
\item
$S_3$ reflects the diagram across the horizontal axis; and
\item
$S_4$ rotates the grid diagram $180^\circ$ and interchanges $X$'s and $O$'s.
\end{itemize}
Both $S_1$ and $S_2$ preserve topological knot type, while $S_3$
produces the topological mirror knot $m(K)$ (with reversed orientation
on $\R^3$), and $S_4$ produces the inverse
(i.e., orientation-reversed) knot $-K$.

The symmetries descend to the quotient $\tilde{\G}$ of grid diagrams
by translation and commutation. On $\tilde{\G}$, it is readily checked
that the symmetries permute the four $X$
stabilizations as shown in Table~\ref{tab:symmetries}. We will use this
information to examine the effect of the symmetries on Legendrian and
transverse knots and braids, as shown in the table and explained below.

\begin{table}
  \begin{center}
$\displaystyle \begin{array}{@{}c*{4}{r@{}>{{}}c<{{}}@{}l}c@{}}
\toprule
\text{Symmetry} & \multicolumn{3}{c}{\text{Knot}} & \multicolumn{3}{c}{\text{Braid}}&
\multicolumn{3}{c}{\text{Legendrian}} & \multicolumn{3}{c}{\text{Transverse}} & X \text{ stabilizations} \\ \midrule
S_1 & K &\mapsto& K & B_{\shortrightarrow} &\mapsto& B_{\shortleftarrow} & \hspace{.3em}L &\mapsto& \mu(L) & &\clap{\text{---}}& &
\vcenter{\xymatrix@=3ex{
NW \ar@{<->}[dr] & NE \ar@{<->}[dl] \\ SW & SE
}} \\\addlinespace
S_2 & K &\mapsto& K & B_{\shortrightarrow} &\mapsto& B_{\shortuparrow} & L &\mapsto& L
& T &\mapsto& T &
\vcenter{\xymatrix@=3ex{
NW \ar@(d,l)[] & NE \ar@{<->}[dl] \\ SW & SE \ar@(u,r)[]
}} \\\addlinespace
S_3 & K &\mapsto& m(K) & B_{\shortrightarrow} &\mapsto& m(B_{\shortrightarrow}) &
&\clap{\text{---}}& & &\clap{\text{---}}& &
\vcenter{\xymatrix@=3ex{
NW \ar@{<->}[d] & NE \ar@{<->}[d] \\ SW & SE
}} \\[4ex]
S_4 & K &\mapsto& -K & B_{\shortrightarrow} &\mapsto& -B_{\shortrightarrow} &
L &\mapsto& -\mu(L) & T &\mapsto& -\mu(T) & \vcenter{\xymatrix@=3ex{
NW \ar@(d,l)[] & NE \ar@(d,l)[] \\ SW \ar@(u,r)[] & SE \ar@(u,r)[]
}} \\
\bottomrule
\end{array}$
  \end{center}
  \caption{The effect of symmetries of a grid diagram on associated topological structures.}
  \label{tab:symmetries}
\end{table}

Since $S_1$ and $S_2$ send \XNE, \XSW{} stabilizations to themselves or
each other, Proposition~\ref{prop:main} implies that these symmetries
descend to maps on $\L$. Indeed, it can be shown (see, e.g.,
\cite[Lemma~4.6]{bib:OST}) that $S_2$ does not change Legendrian
isotopy type: $L\circ S_2(G) = L(G)$. It follows also that $T\circ
S_2(G) = T(G)$. On the other hand, we have
$L\circ S_1(G) = \mu(L(G))$, where $\mu :\thinspace \L\to\L$ is the
Legendrian mirror
operation, which reflects Legendrian front diagrams in the horizontal
axis \cite{bib:FT,bib:OST}. In general, the two maps lead to
two distinct Legendrian knots \cite{bib:Ng}; note that Legendrian
``mirroring'' preserves topological type.
We remark that $S_3$ does not descend to a map on $\L$ (there is no
Legendrian version of the topological mirror construction), and
Legendrian mirrors do not descend to the transverse
category.

The map $S_4$ on Legendrian knots produces the orientation reverse of
the Legendrian mirror: $L \mapsto -\mu(L)$.  This operation descends to
(oriented) transverse knots, in an operation that could be called the
transverse mirror.

We next consider braids. Given a grid diagram, there are four
equally valid ways to obtain a map $\G\to\B$ that preserves
topological knot type. One can require that the braid goes from left
to right, as we do in Section~\ref{ssec:maps}, but one could instead
require that the braid go from bottom to top, right to left, or top to
bottom. We write the resulting maps as
$G \mapsto B_{\shortrightarrow}(G)$, $G \mapsto B_{\shortuparrow}(G)$, $G \mapsto
B_{\shortleftarrow}(G)$, and $G \mapsto B_{\shortdownarrow}(G)$,
respectively. In general, these maps lead to four distinct braids,
related by
\[
B_{\shortrightarrow} \circ S_1(G) = B_{\shortleftarrow}(G)
\hspace{6ex} B_{\shortrightarrow} \circ S_2(G) = B_{\shortuparrow}(G)
\hspace{6ex} B_{\shortrightarrow} \circ S_1 \circ S_2(G) =
B_{\shortdownarrow}(G).
\]
As noted in \cite{bib:KN}, it follows from $L\circ S_2(G) = L(G)$ that the
braids $B_{\shortrightarrow}(G)$ and $B_{\shortuparrow}(G)$ represent the same
element of $\T$ even though they usually differ in $\B$, and the same
is true of the pair $B_{\shortleftarrow}(G)$ and $B_{\shortdownarrow}(G)$.
In addition, if we define operations $B \mapsto m(B)$ and $B \mapsto -B$ on
braids, where $m(B)$ replaces every letter in $B$ by its inverse and
$-B$ is the braid word $B$ read backwards, then
$B_{\shortrightarrow} \circ S_3(G) = m(B_{\shortrightarrow}(G))$ and
$B_{\shortrightarrow} \circ S_4(G) = -B_{\shortrightarrow}(G)$.

All symmetries of the NW-NE-SE-SW square are generated by
$S_1,S_2,S_3$. The following generalization of
Proposition~\ref{prop:main} is an immediate consequence of the
symmetries and Proposition~\ref{prop:main}.

\begin{corollary}
We have bijections \label{prop:maincor}
\begin{align*}
\tilde{\G}/(\XNE,\XSE) & \stackrel{B_{\rightarrow}}{\longrightarrow}
\B &
\tilde{\G}/(\XSW,\XSE) & \stackrel{B_{\uparrow}}{\longrightarrow} \B
\\
\tilde{\G}/(\XNW,\XSW) & \stackrel{B_{\leftarrow}}{\longrightarrow} \B
&
\tilde{\G}/(\XNW,\XNE) & \stackrel{B_{\downarrow}}{\longrightarrow} \B
\\
\tilde{\G}/(\XNE,\XSW) & \stackrel{L}{\longrightarrow} \L &
\tilde{\G}/(\XNW,\XSE) & \stackrel{L\circ S_3}{\longrightarrow} \L \\
\tilde{\G}/(\XNE,\XSW,\XSE) & \stackrel{T}{\longrightarrow} \T &
\tilde{\G}/(\XNW,\XNE,\XSW) & \stackrel{T\circ S_1}{\longrightarrow}
\T \\
\tilde{\G}/(\XNW,\XSW,\XSE) & \stackrel{T\circ S_3}{\longrightarrow}
\T &
\tilde{\G}/(\XNW,\XNE,\XSE) & \stackrel{T\circ S_3 \circ
  S_2}{\longrightarrow} \T
\end{align*}
where $L$, $T$ are induced from the maps $\G\to\L$, $\G\to\T$
described in Section~\ref{ssec:maps}.
\end{corollary}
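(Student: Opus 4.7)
The plan is to use the grid-diagram symmetries $S_1, S_2, S_3$ (which generate the dihedral group $D_4$ of the NW-NE-SE-SW square, as noted just before the corollary) to transport the three base bijections of Proposition~\ref{prop:main} to every entry of the corollary. The key facts are: each $S_i$ preserves translations and commutations and hence descends to a self-bijection of $\tilde\G$; each $S_i$ permutes the four $X$-stabilization types as recorded in Table~\ref{tab:symmetries}, namely $S_1 = (\XNW\ \XSE)(\XNE\ \XSW)$, $S_2 = (\XNE\ \XSW)$ fixing $\XNW$ and $\XSE$, and $S_3 = (\XNW\ \XSW)(\XNE\ \XSE)$; and therefore every composite $\phi \in \langle S_1, S_2, S_3 \rangle$ descends to a bijection $\tilde\G/(I) \to \tilde\G/(\phi(I))$ for any subset $I$ of $X$-types.

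For each of the twelve entries of the corollary, I will choose an explicit $\phi$ so that $\phi(I)$ equals the ``base'' subset for the relevant case of Proposition~\ref{prop:main}---namely $\{\XNE,\XSE\}$ for $\B$, $\{\XNE,\XSW\}$ for $\L$, or $\{\XNE,\XSW,\XSE\}$ for $\T$---and compose with the base bijection. This is always possible because the $D_4$-action is transitive on the four adjacent pairs of $X$-types (yielding the four braid entries), on the two diagonal pairs (the two Legendrian entries), and on the four 3-element subsets (the four transverse entries). The resulting composite is then identified with the named map in the corollary: for $\L$ and $\T$ this identification is by definition, while for $\B$ one invokes the three identities $B_{\rightarrow}\circ S_1 = B_{\leftarrow}$, $B_{\rightarrow}\circ S_2 = B_{\uparrow}$, and $B_{\rightarrow}\circ S_1\circ S_2 = B_{\downarrow}$ already recorded in Section~\ref{ssec:conventions}.

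The only substantive obstacle is verifying the claimed permutation action of $S_1, S_2, S_3$ on the four $X$-types. The actions of $S_1$ and $S_3$ follow immediately from the geometric action on $2\times 2$ subgrids; $S_2$ requires slightly more care, since it combines a diagonal reflection with the $X \leftrightarrow O$ interchange. For example, one checks that $S_2$ sends an $\XNE$-stabilization first to an $\ONE$-stabilization, which then coincides with an $\XSW$-stabilization in $\tilde\G$ by the identification ($\ONW/\ONE/\OSW/\OSE \sim \XSE/\XSW/\XNE/\XNW$) recalled just after Definition~\ref{def:grid}. Once the analogous check is carried out for each of the four $X$-types under $S_2$, the remaining work is purely bookkeeping.
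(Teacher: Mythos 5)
Your proposal is correct and is essentially the paper's own argument: the paper gives no proof beyond declaring the corollary an immediate consequence of Proposition~\ref{prop:main} and the symmetry table, and your plan spells out exactly that transport of structure, including the one genuinely non-obvious point (the action of $S_2$ on stabilization types via the $O$-to-$X$ identification). One remark worth recording: if you carry out the bookkeeping you defer, you will find that $S_3$ sends $\{\XNW,\XSW,\XSE\}$ to $\{\XNW,\XNE,\XSW\}$ rather than to $\{\XNE,\XSW,\XSE\}$, so the correct $\phi$ for the domain $\tilde{\G}/(\XNW,\XSW,\XSE)$ is $S_3\circ S_2$ while $S_3$ alone serves for $\tilde{\G}/(\XNW,\XNE,\XSE)$ --- that is, the maps labelling the last two transverse bijections in the corollary as printed appear to be interchanged, a slip that your method detects but your write-up, by stopping short of the explicit verification, does not.
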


\noindent
Note that three of the bijections in Proposition~\ref{prop:maincor}
involve $S_3$ and thus topological mirroring.

We now discuss the conventions used in Section~\ref{ssec:maps} in
light of symmetries of grid diagrams. Our conventions are chosen to
make the maps in Formula (\ref{eq:diagram}) always preserve
topological knot type. This involves making several choices:
\begin{itemize}
\item
vertical over horizontal line segments in grid diagrams (vs.\
horizontal over vertical), and Legendrian fronts obtained by
$45^\circ$ counterclockwise rotation (vs.\ clockwise);
\item
transverse knots given by positive pushoffs of Legendrian knots (vs.\
negative);
\item
braids going from left to right (vs.\ bottom to top, right to left,
top to bottom).
\end{itemize}

These choices largely agree with the standard conventions in the
literature \cite{bib:Cro,bib:Dyn,bib:EFM,bib:Et,bib:MOS,bib:MOST}. One
can obtain different conventions from ours by applying
grid-diagram symmetries. For braids, this is discussed above, while
for transverse knots, positive pushoffs become negative pushoffs by
applying the symmetry $S_1$: negative pushoffs are transversely isotopic under
\XNW,\XNE,\XSW{} (de)stabilization.

For the knot Floer homology invariant introduced in \cite{bib:OST} and
subsequently used in \cite{bib:KN,bib:NOT}, a slightly different set
of conventions is useful. Here an element $\lambda^+$ of combinatorial
knot Floer homology $\mathit{HK}^-$ is associated to any grid diagram,
and $\lambda^+$
is shown to be invariant under translation, commutation, and
\XNW,\XSW,\XSE{} (de)stabilization. (Another element
$\lambda^-$ is also considered in \cite{bib:OST}; in our notation,
$\lambda^- = \lambda^+ \circ S_1$.) If we apply symmetry $S_2\circ S_3$ to a
grid diagram $G$ before calculating $\lambda^+$, then $\lambda^+$ becomes
an invariant of the transverse knot $T(G)$.

In \cite{bib:KN,bib:NOT,bib:OST}, the map $\G \to \L$ is thus given by $G
\mapsto (L\circ S_2\circ S_3)(G)$ rather than $G \mapsto L(G)$. More explicitly,
given a grid diagram, one can use the horizontal-over-vertical
convention and $45^\circ$ clockwise rotation to obtain a Legendrian
front, as is done in these papers. (In particular, to translate from
our conventions to those of \cite{bib:KN}, first apply $S_2\circ S_3$ to all
grid diagrams.)
Note that due to the presence of
$S_3$, $\lambda^+$ becomes an element of $\mathit{HK}^-$ of the
topological \textit{mirror} of the transverse knot.

\section{Proof of Proposition~\ref{prop:main}}
\label{sec:proof}

Let $B(G)$ ($=B_{\rightarrow}(G)$ from
Section~\ref{ssec:conventions}) denote the braid associated to
a grid diagram $G$ as described in Section~\ref{sec:defs}.
Proposition~\ref{prop:main} (or, more precisely, the braid statement
of Proposition~\ref{prop:main}) is a direct consequence of the following
stronger result.

\begin{proposition} Let $G$ be a grid diagram.
\begin{enumerate}
\item\label{it:1}
Changing $G$ by torus translation or
 \XNE,\XSE{} (de)stabilization changes
$B(G)$ by conjugation.
\item
Changing $G$ by commutation changes $B(G)$ by a combination of
\label{it:2}
conjugation and exchange moves.
\item\label{it:3}
The map $G \mapsto B(G)$ induces a bijection
between
$\G$/(translation, commutation, \XNE, \XSE{} (de)stabilization) and
$\B$/(conjugation, exchange).
\end{enumerate}
\label{prop:gridbraid}
\end{proposition}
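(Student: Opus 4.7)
The plan is to verify parts (\ref{it:1}) and (\ref{it:2}) by direct local analysis of each Cromwell move's effect on the rectilinear braid diagram of Section~\ref{ssec:maps}, and then to deduce part (\ref{it:3}) by constructing a right-inverse to $B$ and checking that it descends to the required quotient.

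For parts (\ref{it:1}) and (\ref{it:2}), I would treat each move type separately. Horizontal torus translation cyclically permutes the columns of $G$, so it cyclically rotates the braid word $B(G)$ and realizes a conjugation; vertical torus translation similarly induces conjugation by the product of $\sigma_i$'s recording the crossings along the moved row. An \XNE{} or \XSE{} stabilization inserts a $2\times 2$ block whose rectilinear pattern is a rightward U-turn with no crossings, so $B(G)$ is literally unchanged. Vertical commutation introduces no new crossings and likewise leaves $B(G)$ unchanged. Horizontal commutation swaps two adjacent columns containing vertical segments; the disjoint/nested case analysis reduces it to either a far-apart braid-group relation (trivial in $\B_n$) or to an exchange move on $B(G)$, possibly conjugated by an initial subword.

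For part (\ref{it:3}), well-definedness of the induced map $B : \tilde{\G}/(\XNE,\XSE) \to \B$ is immediate from (\ref{it:1}) and (\ref{it:2}). For surjectivity I would reverse the construction of Section~\ref{ssec:maps}: given a braid word $w = \sigma_{i_1}^{\epsilon_1} \cdots \sigma_{i_k}^{\epsilon_k}$ in $\B_n$, build a grid diagram $G(w)$ by drawing $w$ as a rectilinear braid diagram with one column per crossing, closing the braid ends using $O$-to-$X$ horizontal arcs, and distributing the $O$'s and $X$'s so that each row and column contains exactly one of each. A direct check gives $B(G(w)) = w$.

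The main obstacle is injectivity, which amounts to showing that $w \mapsto G(w)$ descends to a well-defined map $\B \to \tilde{\G}/(\XNE,\XSE)$. One must verify that the defining relations of $\B_n$, together with conjugation and exchange, produce grid diagrams equivalent under translation, commutation, and \XNE/\XSE{} (de)stabilization. Cyclic rotation is realized by a horizontal torus translation, the far-commutation $\sigma_i\sigma_j = \sigma_j\sigma_i$ ($|i-j|\geq 2$) by a vertical commutation, and an exchange move by a horizontal commutation in the nested configuration. The hardest case is the braid relation $\sigma_i\sigma_{i+1}\sigma_i = \sigma_{i+1}\sigma_i\sigma_{i+1}$: the two sides have canonical grid representatives differing by a local triangle move on three adjacent crossing columns, and realizing this equivalence requires an explicit local sequence of \XNE/\XSE{} stabilizations, commutations, and subsequent destabilizations. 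Once both maps are well-defined, $B \circ G = \mathrm{id}$ is transparent from the construction, and $G \circ B \equiv \mathrm{id}$ follows by normalizing any grid diagram, via \XNE/\XSE{} destabilization together with translation and commutation, to the canonical form $G(B(G))$.
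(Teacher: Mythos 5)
Your overall strategy coincides with the paper's: verify parts (\ref{it:1}) and (\ref{it:2}) by local inspection of the rectilinear braid diagram, then prove (\ref{it:3}) by building the inverse $B \mapsto G(B)$ and checking it descends. However, there is a concrete error in your case analysis of commutations, and it is exactly the case that carries the content of part (\ref{it:2}). With the left-to-right convention $B = B_{\shortrightarrow}$, the braid strands are the \emph{rows} of the grid, so the only way a grid move can convert the two $\sigma_{n-1}^{\pm 1}$ crossings of an exchange move into $\sigma_{n-1}^{\mp 1}$ is by altering the relative position of the bottom two rows --- that is, by interchanging two adjacent rows (what the paper's definitions call a \emph{vertical} commutation). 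Your claim that ``vertical commutation introduces no new crossings'' fails precisely in the nested/overlapping configuration: when the two rows' horizontal extents overlap, swapping them transfers a pair of crossings between the attached vertical segments and the two horizontal strands, and this is the exchange move (the paper's Figure~\ref{fig:comm}: three subcases leave $B(G)$ unchanged, one realizes $B_1\sigma_{n-1}^{-1}B_2\sigma_{n-1} \leftrightarrow B_1\sigma_{n-1}B_2\sigma_{n-1}^{-1}$). Conversely, interchanging two adjacent \emph{columns} merely reorders two consecutive ``events'' in the braid word without changing any over/under data or which pairs of strands cross, so in both the disjoint and the nested case it is a braid isotopy and never produces an exchange move. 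So your attribution is exactly reversed; if you carried out your analysis as written, the exchange move would never appear, and part (\ref{it:2}) (and hence injectivity in (\ref{it:3})) would not be established. (In fairness, the paper's Table~\ref{tab:moves} and the caption of Figure~\ref{fig:comm} use ``horizontal commutation'' loosely for the interchange of two horizontal rows, contradicting the paper's own definition in Section~\ref{sec:defs}; but the proof text and the geometry are unambiguous.)

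A secondary, smaller gap is in your treatment of the exchange move on the inverse map: you say an exchange of braids is realized ``by a horizontal commutation in the nested configuration,'' but the grid diagram $G(B)$ built from a braid word is not in a position where a single commutation applies. The paper needs a sequence of commutations together with \XNE{} and \XSE{} (de)stabilizations to bring the two crossings into commutable position (Figures~\ref{fig:braidexch} and~\ref{fig:braidexch-detail}); since \XNE{} and \XSE{} moves are in the quotient, this is harmless, but it must be said. Your identification of the braid relation $\sigma_i\sigma_{i+1}\sigma_i = \sigma_{i+1}\sigma_i\sigma_{i+1}$ as requiring an explicit local verification is reasonable and matches what the paper delegates to Cromwell.
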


\begin{proof}
We first check claims \eqref{it:1} and \eqref{it:2}.
A quick inspection of braid diagrams reveals that changing a grid
diagram $G$ by horizontal commutation or
by \XNE{} or \XSE{} stabilization does not change the braid
isotopy type of $B(G)$.

Changing $G$ by horizontal torus
translation changes $B(G)$ by conjugation; some portion of the
beginning of $B(G)$ is moved to the end, or vice versa. See
Figure~\ref{fig:trans}.

Next we claim that changing $G$ by vertical torus translation also
changes $B(G)$ by conjugation. Indeed, consider moving the topmost
column of $G$ to the bottom. By conjugating by a horizontal torus
translation if necessary, we may assume that in the relevant row,
the $O$ lies to the left of the $X$. Then moving the column keeps the braid
unchanged; see Figure~\ref{fig:trans} again.

\begin{figure}
\centerline{
\resizebox{5.25in}{!}{\input{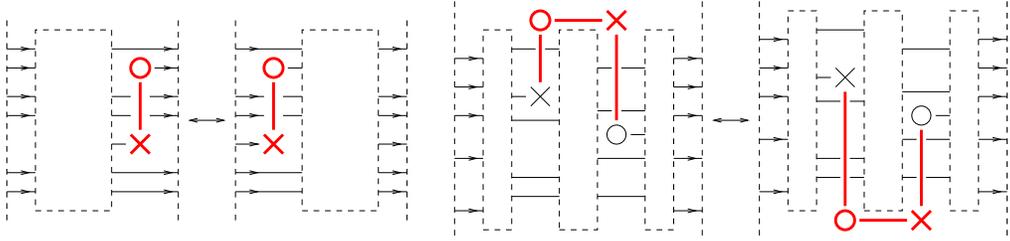}}
}
\caption{The effect on $B(G)$ of changing $G$ by horizontal (left) and
  vertical (right) torus translation. The bold $X$ and $O$ represent
  the column/row being moved.}
\label{fig:trans}
\end{figure}

Finally, we claim that changing $G$ by a vertical commutation
changes $B(G)$ by conjugation and/or
exchange. Indeed, by conjugating with an appropriate torus translation if
necessary, we may assume the following: the two relevant rows are the
bottom two rows in the grid diagram; the $X$ and $O$ in
the bottom row both lie to the right of the $X$ and $O$ in the row
above it; and the bottom right corner of the grid diagram is occupied by
an $X$ or $O$. If $X$ lies to the left of $O$ in both rows, then the
commutation changes $B(G)$ by exchange; otherwise, it does not change
$B(G)$. See Figure~\ref{fig:comm}.

\begin{figure}
\centerline{
\resizebox{5.25in}{!}{\input{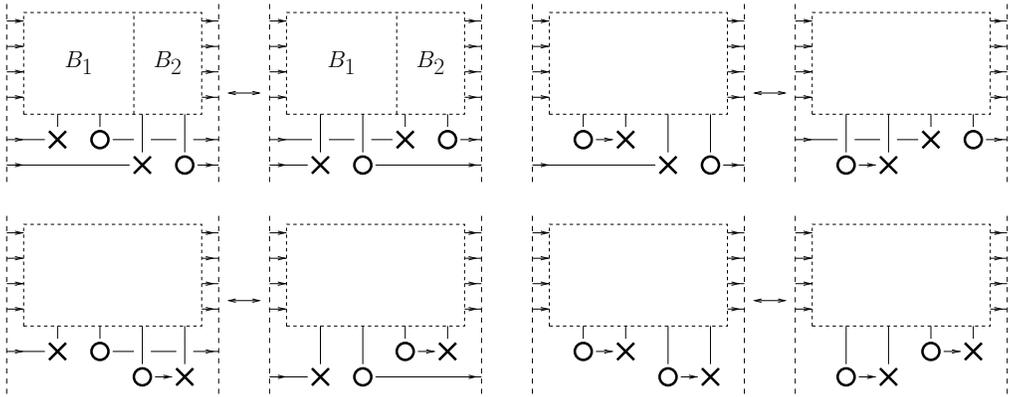}}
}
\caption{The effect on $B(G)$ of changing $G$ by horizontal
  commutation. In three cases, $B(G)$ is unchanged. In the other case
  (upper left), the $n$-strand braid
$B(G)$ changes from $B_1\sigma_{n-1}^{-1}B_2\sigma_{n-1}$ to
  $B_1\sigma_{n-1} B_2\sigma_{n-1}^{-1}$, an exchange move.
}
\label{fig:comm}
\end{figure}

We now establish claim \eqref{it:3}. From claims \eqref{it:1} and
\eqref{it:2}, the
map in (\ref{it:3}) is well-defined. To prove bijectivity, we
construct an inverse. Any braid $B$ can be given a rectilinear braid
diagram by replacing each crossing by an appropriate rectilinear
version; see Figure~\ref{fig:rectbraid}.
\begin{figure}
\centerline{
\resizebox{4in}{!}{\input{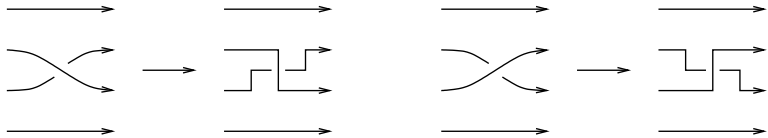}}
}
\caption{Turning a braid diagram into a rectilinear braid diagram.
}
\label{fig:rectbraid}
\end{figure}

Perturb the resulting
rectilinear diagram slightly to another rectilinear diagram for which
no vertical line segments have the
same $x$-coordinate (i.e., are collinear), and no horizontal line
segments have the same $y$-coordinate except for those that are
identified when the ends of the braid are identified.
The perturbed diagram is oriented (from left to right), and each
corner can be assigned an $X$ or $O$ in the usual way. The collection
of $X$'s and $O$'s forms a grid diagram $G(B)$, and by construction we
have $B = B(G(B))$.

\begin{figure}
\centerline{
\resizebox{5.25in}{!}{\input{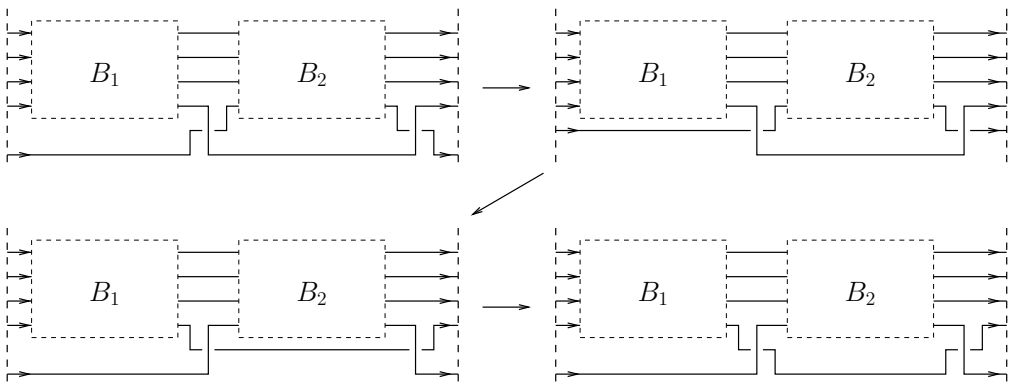}}
}
\caption{Accomplishing an exchange move through a sequence of
  commutation and (de)stabilization moves. The first arrow is given by
  commutations, one \XNE{} destabilization, and one
  \XSE{} destabilization; the second is a horizontal commutation; the
  third is commutations, one \XNE{} stabilization, and one
  \XSE{} destabilization. See also Figure~\ref{fig:braidexch-detail} for
  the moves corresponding to the first and third arrows.
}
\label{fig:braidexch}
\end{figure}

\begin{figure}
\centerline{
\resizebox{4in}{!}{\input{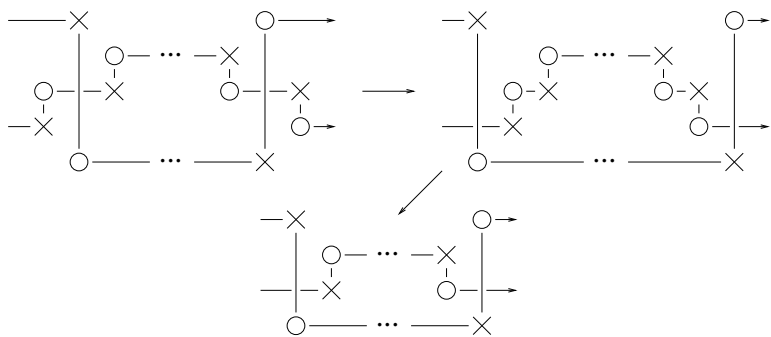}}
}
\caption{
Detail of local moves in the first step of
Figure~\ref{fig:braidexch}. A vertical
commutation move is followed by \XNE{} and \XSE{} destabilization.
}
\label{fig:braidexch-detail}
\end{figure}

Note that $G(B)$ depends on the choice of perturbation from
rectilinear braid diagram to grid diagram, but a different
perturbation simply changes $G(B)$ by commutation. In fact,
up to commutation and \XSW{},\XSE{}
(de)stabilization, $G(B)$ is well-defined for an isotopy class of
braids $B$. This fact is readily established by examining how $G(B)$
changes when the braid word for $B$ changes by one of the relations
$\sigma_i\sigma_i^{-1} = \sigma_i^{-1}\sigma_i = 1$, $\sigma_i\sigma_j
= \sigma_j\sigma_i$ for $|i-j| \geq 2$, and
$\sigma_i\sigma_{i+1}\sigma_i = \sigma_{i+1}\sigma_i\sigma_{i+1}$. See
\cite{bib:Cro} for details.

In addition, changing $B$ by conjugation changes $G(B)$ by
horizontal torus translation, while changing $B$ by an exchange move changes
$G(B)$ by a combination of horizontal commutations and \XNE,\XSE{}
(de)stabilizations; see Figures~\ref{fig:braidexch}
and~\ref{fig:braidexch-detail}.
Thus $B$ induces a map from $\B$/(conjugation, exchange) to
$\G$/(translation, commutation, \XNE, \XSE{} (de)stabilization).

If we consider $G$ and $B$ as maps between $\G$/(translation,
commutation, \XNE, \XSE{} (de)stabilization) and $\B$/(conjugation,
exchange), then as noted earlier, $B \circ G$ is the identity, and one
readily checks that $G \circ B$ is the identity as well. Claim
\eqref{it:3} follows, and the proof of
Proposition~\ref{prop:gridbraid} is complete.
\end{proof}

\section*{Acknowledgments}

LLN thanks the participants of the conference ``Knots in Washington
XXVI'' for useful comments on a preliminary version of the results
presented here.  DPT thanks Ciprian Manolescu, Peter Ozsv\'ath, and
Zolt\'an Szab\'o for helpful conversations.  LLN was supported by NSF grant
DMS-0706777; DPT was supported by a Sloan Research Fellowship.


\end{document}